\newcommand{\mbf}[1]{\textbf{\textit{#1}}}
\newcommand{\mbb}{\mathbb}
\newcommand{\ov}{\overline} 
\newcommand{\un}{\underline} 
\newcommand{\m}{\mathrm{mid}\;} 
\newcommand{\Tol}{\mathrm{Tol}\,} 
\newcommand{\Ab}{(\mbf{A},\mbf{b})} 
\newcommand{\cond}{\mathrm{cond}\,} 
\renewcommand{\r}{\mathrm{rad}\;} 
\definecolor{MyGreen}{rgb}{0.1,0.7,0.2} 
\definecolor{Gray1}{rgb}{0.6,0.6,0.65}
\definecolor{Gray2}{rgb}{0.4,0.45,0.4}
\definecolor{Gray3}{rgb}{0.6,0.55,0.55} 
\newcounter{defi}
\newcounter{theo}
\newcounter{exmp}
\newenvironment{definition}{\addvspace{\bigskipamount}
  \noindent{\bf Definition \arabic{defi}.}\sl}{\addtocounter{defi}{1}
  \par\addvspace{\bigskipamount}}
\newenvironment{theorem}{\addvspace{\bigskipamount}
  \noindent{\bf Theorem \arabic{theo}.}\sl}{\addtocounter{theo}{1}
  \par\addvspace{\bigskipamount}}
\newenvironment{proof}{\noindent{\sl Proof.}}{\qquad$\Box$
  \par\addvspace{\bigskipamount}} 
\newenvironment{example}{\addvspace{3ex}\noindent\addtocounter{exmp}{1}%
  {\textbf{Example~\arabic{exmp}\;}}}{\hfill$\blacksquare$\par\addvspace{3ex}}
\title{\bf Interval regularization \\[4pt]  
       for imprecise linear algebraic equations} 
\author{Sergey P. Shary}
\date{\small Institute of Computational Technologies SB RAS, \\ 
          6 Lavrentiev ave., 630090 Novosibirsk, Russia,     \\ 
                    \texttt{shary@ict.nsc.ru}} 
\begin{document}
\maketitle
  
\begin{abstract}
In this paper, we consider the solution of ill-conditioned systems of linear algebraic 
equations that can be determined imprecisely. To improve the stability of the solution 
process, we ``immerse'' the original imprecise linear system in an interval system of 
linear algebraic equations of the same structure and then consider its \emph{tolerable 
solution set}. As the result, the ``intervalized'' matrix of the system acquires close 
and better conditioned matrices for which the solution of the corresponding equation 
system is more stable. \\ 
As a pseudo-solution of the original linear equation system, we take a point from 
the tolerable solution set of the intervalized linear system or a point that provides 
the largest tolerable compatibility (consistency). We propose several computational 
recipes to find such pseudo-solutions. \\[2mm]
\textbf{Keywords:} linear equation system, imprecision, interval uncertainty, 
pseudo-solution, tolerable solution set, recognizing functional, interval regularization. 
\end{abstract} 
  
\bigskip 
  
\section{Problem statement}

In our work, we consider using methods of interval analysis for the solution 
of ill-conditioned systems of linear algebraic equations that can be specified 
imprecisely or inaccurately. We are developing a procedure for regularization of 
such problems, i.\,e., for improving stability of the process of solving them, which 
is called ``interval regularization''. 
  
Let us be given a system of linear algebraic equations of the form 
\begin{equation}
\label{LinSystem1} 
\arraycolsep 1pt
\left\{ \
\begin{array}{ccccccccc}
a_{11} x_1 &+& a_{12} x_2 &+&\ldots&+& a_{1n} x_n &=& b_1, \\[3pt]
a_{21} x_1 &+& a_{22} x_2 &+&\ldots&+& a_{2n} x_n &=& b_2, \\[3pt]
\vdots && \vdots && \ddots &&\vdots &&\vdots \\[3pt]
a_{m1} x_1 &+& a_{m2} x_2 &+&\ldots&+& a_{mn} x_n &=& b_m,
\end{array}
\right.
\end{equation}
with coefficients $a_{ij}$ and right-hand sides $b_i$, or, in concise form, 
\begin{equation}
\label{LinSystem2}
A x = b
\end{equation}
where $A = (\,a_{ij})$ is an $m\times n$-matrix and $b = (\,b_i)$ is a right-hand side 
vector. In our paper, we mainly consider the square case $m = n$, but some of our 
constructions are more general and they can be applied to rectangular linear systems 
with $m\neq n$. 
  
In the linear system \eqref{LinSystem1}--\eqref{LinSystem2}, the matrix $A$ may be 
ill-conditioned or even singular. The system may have no solutions at all in the 
classical sense. Also, it can be specified imprecisely, with some measure of imprecision 
given. Our task is to find a solution  or a pseudo-solution (its substitute defined 
in a reasonable sense) for the system of equations \eqref{LinSystem1}--\eqref{LinSystem2} 
in a stable way. 
  
Since we are going to use methods of interval analysis in our work, the imprecision 
in specifying the systems of linear algebraic equations will be described using 
the interval concepts too. In accordance with the informal international standard 
\cite{INotation} which is used throughout this work, we designate intervals and 
interval values in bold, while usual non-interval (point) objects are not marked in 
any way. So, instead of the system of equations \eqref{LinSystem1}--\eqref{LinSystem2}, 
we shall have an interval system of linear algebraic equations 
\begin{equation}
\label{InLSystem1} 
\arraycolsep 1pt
\left\{ \
\begin{array}{ccccccccc}
\mbf{a}_{11} x_1 &+& \mbf{a}_{12} x_2 &+&\ldots&+& \mbf{a}_{1n} x_n &=& \mbf{b}_1, \\[3pt]
\mbf{a}_{21} x_1 &+& \mbf{a}_{22} x_2 &+&\ldots&+& \mbf{a}_{2n} x_n &=& \mbf{b}_2, \\[3pt]
\vdots && \vdots && \ddots &&\vdots &&\vdots \\[3pt]
\mbf{a}_{m1} x_1 &+& \mbf{a}_{m2} x_2 &+&\ldots&+& \mbf{a}_{mn} x_n &=& \mbf{b}_m, 
\end{array}
\right.
\end{equation}
with interval coefficients $\mbf{a}_{ij}$ and interval right-hand sides $\mbf{b}_i$, or, 
in concise form, 
\begin{equation}
\label{InLSystem2}
\mbf{A}x = \mbf{b}, 
\end{equation} 
where $\mbf{A} = (\,\mbf{a}_{ij})$ is an interval matrix and $\mbf{b} = (\,\mbf{b}_i)$ 
is an interval right-hand side vector. The major part of our constructions below is 
insensitive to such a change in the object under study. The interval linear system 
\eqref{InLSystem1}--\eqref{InLSystem2} is then considered as a family of point linear 
systems of the form \eqref{LinSystem1}--\eqref{LinSystem2} which are equivalent 
to each other to within a prescribed accuracy specified by the intervals 
in $\mbf{A}$ and $\mbf{b}$. 
  
  
\section{Idea of the solution}

We are going to rely on the following fact from matrix theory. Let $A$ be 
an $n\times n$-matrix and its condition number $\mathrm{cond}(A) =\|A\|\cdot\|A^{-1}\|$, 
defined for a subordinate norm $\|\cdot\|$, satisfies $\mathrm{cond}(A) > 1$. Then, in 
any neighbourhood of the matrix $A$, there are matrices $\tilde{A}$ having better 
condition number $\cond(\tilde{A}) < \cond(A)$. This follows from that the condition 
number does not have local minima, except for the global one --- namely, 
$\mathrm{cond}(A) = 1$ for subordinate norms. 
  
As a result, one naturally arrives at the following idea: we can replace the solution 
of the original system $Ax = b$ by the solution of the system $\tilde{A}x = b$ 
with close, but better conditioned matrix $\tilde{A}$. Under favorable circumstances, 
the solution to the new system will be close to the desired solution of the original 
system \eqref{LinSystem1}--\eqref{LinSystem2}. 
  
The idea we have just formulated is not new. There exists the \emph{Lavrentiev 
regularization method} \cite{Lavrentiev,LavrentievSaveliev} (see also \cite{Kabanikhin, 
TikhonovArsenin}), a popular regularization technique for the integral equations of 
the first kind and similar operator equations, and its essense is almost the same as 
the above stated idea. Imposing a small perturbation on the operator involved in the 
equation, we shift its small eigenvalues from zero and, hence, the operator moves away 
from singularity. This improves stability of the solution. 
  
The Lavrentiev regularization method also applies to systems of linear algebraic equations 
of the form \eqref{LinSystem1}--\eqref{LinSystem2}. In the simplest case, when the matrix 
$A$ is, e.\,g., symmetric and positive semidefinite, we should solve 
\begin{equation*} 
(A + \theta I)\,x = b 
\end{equation*} 
instead of the equation system \eqref{LinSystem1}--\eqref{LinSystem2}, where $I$ is 
the identity matrix and real number $\theta > 0$ is a~shift parameter. If $\lambda(A)$ are 
eigenvalues of $A$, then the eigenvalues of $A + \theta I$ becomes $\lambda(A) + \theta$, 
and the condition number with respect to the spectral norm is 
\begin{equation*} 
\cond(A + \theta I)\; = \;\frac{\lambda_{\max}(A) + \theta}{\lambda_{\min}(A) + \theta}. 
\end{equation*} 
It obviously decreases in comparison with $\cond(A) = \lambda_{\max}(A)/\lambda_{\min}(A)$
since the function 
\begin{equation*} 
f(x) = \frac{b + x}{a + x} = 1 + \frac{b - a}{a + x} 
\end{equation*} 
is evidently decreasing for $x > 0$ under $b > a\geq 0$. 
  
The Lavrentiev regularization is widely used for various equations and systems 
of equations, when the properties of $A$ are \emph{a priori} known, and the most 
important of them is information on how the spectrum of $A$ is located. In general, 
when we know nothing about the properties of the matrix $A$, the choice of the parameter 
$\theta$, i.\,e., the direction of the shift and its magnitude, is not evident. 
  
Turning to our idea, the main question is how to choose a better conditioned matrix 
$\tilde{A}$ near $A$? In other words, where and how to move the matrix $A$, if we do not 
know its properties? 
  
The unexpected implementation of our idea in the case when no information about $A$ 
is available may be to perform a shift of $A$ ``in all directions'' at the same time. 
Then there is certaintly a suitable direction among our shifts, ant it will provide 
desirable regularization and improvement of the matrix. 
  
  
\begin{figure}[htb]
\centering 
\unitlength=1.1mm 
\begin{picture}(100,50)
\put(0,14){\includegraphics[width=44mm]{AllDirects.eps}} 
\put(45,25){\color{Gray2}\rotatebox{-20}{\scalebox{1.8}{$\dashrightarrow$}}} 
\put(60,2){\includegraphics[width=44mm]{InflaDirs.eps}}
\end{picture} 
\caption{Displacement in all directions and all distances simultaneously}
is equivalent to covering a~neighborhood of the initial point. 
\label{ShiftPic} 
\end{figure}
  
  
Within the traditional data types used in calculus and numerical analysis, it is 
hardly possible to put into practice such an exotic recipe, but relevant tools have 
been already created in interval analysis (see, for example, \cite{IntervalComputs, 
IntervalAnalysis,GMayer,MooreBakerCloud,Neumaier,SharyBook}). With their help, 
our idea gets an elegant embodiment. 
  
In order to reach, with guarantee, the matrix $\tilde{A}$ no matter where it is, 
we shift the original matrix $A$ in all directions and to all possible distances 
that do not exceed a predetermined value $\theta$ (see Fig.~\ref{ShiftPic}) in 
a specified norm. This is equivalent to enclosing an entire neighborhood 
of the matrix~$A$. 
  
In interval terms, we ``inflate'' the matrix $A$, thus turning it into an interval 
matrix~$\mbf{A}$. To cover all possible shift directions of the matrix $A$, we assign 
\begin{equation*} 
\mbf{A} = A + \theta\mbf{E}, 
\end{equation*} 
where $\mbf{E} = ([-1, 1])$ is the matrix, of the same size as $A$, made up of the 
intervals $[-1, 1]$, and $\theta$ is the parameter of the ``inflation'' value. 
In general, instead of the equation system \eqref{LinSystem1}--\eqref{LinSystem2}, 
we come to the need to ``solve'' the interval system of linear algebraic equations 
\begin{equation} 
\label{InLSystem3}
\mbf{A}x = b, 
\end{equation} 
having the form \eqref{InLSystem1}--\eqref{InLSystem2}, and the solution process 
must be stable. In particular, it is desirable to base the solution process 
on well-conditined matrices within $\mbf{A}$. 
  
Notice that our construction is more general and, possibly, more flexible than 
the Lavrentiev regularization, since we use the matrix $A + \theta\mbf{E}$ instead 
of just $A + \theta I$, that is, we can perturbate off-diagonal elements of $A$ too. 
  
\begin{example} 
\label{InflaCondExmp} 
As an example demonstrating the evolution of the condition number after a point matrix 
inflates to an interval one, we consider the matrix 
\begin{equation*} 
\arraycolsep=3pt 
A = \left( 
\begin{array}{cc}
 99 & 100 \\[3pt] 
 98 &  99 
\end{array}
\right). 
\end{equation*} 
With respect to the spectral matrix norm $\|A\| = \sqrt{\lambda_{\max}(A^{\top}\!A)}$, 
the condition number of the matrix is $\cond(A) = 3.92\cdot 10^4$, and it is not hard 
to show that this is the maximum for regular $2\times 2$-matrices with positive integer 
elements $\leq 100$. 
  
Let us ``intervalize'' the matrix $A$ by adding $[-1, 1]$ to each element. We get 
\begin{equation*} 
\arraycolsep=3pt 
\mbf{A} = \left( 
\begin{array}{cc}
{[98, 100]} & {[99, 101]} \\[3pt] 
{[97, 99]} &  {[98, 100]} 
\end{array}
\right). 
\end{equation*} 
  
The new interval matrix acquires a singular point matrix 
\begin{equation*} 
\arraycolsep=3pt 
\left( 
\begin{array}{cc}
 98 & 99 \\[3pt] 
 98 & 99 
\end{array}
\right) 
\end{equation*} 
and many more singular matrices. The condition numbers of the ``endpoint matrices'' 
of the intervalized matrix $\mbf{A}$ are equal to 
  
\begin{center} 
\tabcolsep=7pt 
\begin{tabular}{cccc}  
$3.84\cdot 10^4$, &     $197.02$,     &     $201.12$,     & $1.31\cdot 10^4$, \\[3pt] 
   $197.02$,      &     $98.76$,      & $1.31\cdot 10^4$, &    $195.12$,      \\[3pt] 
   $197.0$,       & $3.92\cdot 10^4$, &      $99.26$,     &    $199.02$       \\[3pt] 
$3.92\cdot 10^4$, &     $199.00$,      &     $199.02$,     &  $4.0\cdot 10^4$.
\end{tabular} 
\end{center} 
  
\smallskip\noindent 
We can see that, among 16 endpoint matrices, one matrix has larger condition number 
$4.0\cdot 10^4$, two matrices have the same condition number, and one matrix is slightly 
better conditined. However, 10 matrices of 16 have considerably smaller condition numbers 
$\leq 200$. A more thorough numerical test shows that the condition number $98.76$, 
attained at the endpoint matrix 
\begin{equation*} 
\arraycolsep=3pt 
\left( 
\begin{array}{cc}
100 &  99 \\[3pt] 
 97 & 100 
\end{array}
\right), 
\end{equation*} 
is really minimal among all the condition numbers of the point matrices from $\mbf{A}$. 
We will further discuss this phenomenon in Section~\ref{TolSolSetSect}. 
\end{example} 
  
Another observation is that not only well-conditioned point matrices fall into 
the interval matrix $\mbf{A}$ after intervalization of $A$. Ill-conditioned and even 
singular matrices also appear in $\mbf{A}$. Our task is to construct the solution 
process in such a way that it relies mainly on well-conditioned matrices from $\mbf{A}$. 
  
  
\section{Implementation of the idea} 
\label{ImplemIdeaSect}

In modern interval analysis, the concept of ``solution'' of an interval equation or 
a system of equations can be understood in various ways which are very different from 
each other. As a rule, the solutions to interval problems are estimates (most often, 
also interval ones) of some ``solution sets'' arising in connection with the interval 
problem statement. In its turn, the ``solution sets'' are usually determined from 
solutions to separate point problems forming the interval problem under study, but 
that can be done in various ways depending on the types of uncertainty that the input 
data intervals express. 
  
The fact is, the interval data uncertainty has, in its essense, a dualistic and 
ambivalent character \cite{SharySurvey,SharyBook}. In the formal setting of any 
interval problem, we need to distinguish between the so-called uncertainties of 
the A-type and E-type, or, briefly, \emph{A-uncertainty} and \emph{E-uncertainty}: 
\begin{itemize} 
\item 
the uncertainty of the A-type (A-uncertainty) corresponds to the application \\ 
of the logical quantifier ``$\forall$'' to the interval variable, that is, when \\
the condition ``$\forall x\in\mbf{x}$'' enters the definition of the solution set; 
\item 
the uncertainty of the E-type (E-uncertainty) corresponds to the application \\ 
of the logical quantifier ``$\exists$'' to the interval variable, that is, when \\  
the condition ``$\exists x\in\mbf{x}$'' enters the definition of the solution set. 
\end{itemize} 
Sometimes, in connection with the properties expressed by interval A-uncertainties 
and E-uncertainties, the terms a \emph{strong property} and a \emph{weak property} are 
used.
  
As a consequence, different solution sets for interval systems of equations and other 
interval problems can be defined by various combinations of these quantifiers applied 
to interval parameters. The simplest and most popular among the solution sets is the 
set obtained by collecting all possible solutions of non-interval (point) equations or 
systems of equations which we get by fixing the parameters of the system within 
specified intervals. This is the ``united solution set''. 
  
\begin{definition} 
For the interval system of linear algebraic equations \eqref{InLSystem1}%
--\eqref{InLSystem2}, the set 
\begin{align*} 
\Xi_{\mathit{uni}}\Ab \  &\stackrel{\text{def}}{=} \   \bigl\{\, 
  x\in\mbb{R}^n \mid (\exists A\in\mbf{A})(\exists b\in\mbf{b})(Ax = b)\,\bigr\} \\[2pt] 
&= \,\ \bigl\{\,x\in\mbb{R}^n \mid (\exists A\in\mbf{A})(Ax\in\mbf{b})\,\bigr\}. 
\end{align*} 
is called \textit{united solution set}. 
\end{definition} 
  
The above definition is organized according to the separation axiom from the formal 
set theory (which is also known as ``axiom schema of specification'' or ``subset axiom 
scheme''): ``Whenever the propositional function $P(x)$ is definite for all elements 
of a set $M$, there exists a subset $M'$ in $M$ that contains precisely those elements 
$x$ of $M$ for which $P(x)$ is true'' (see, e.\,g., \cite{Halmos}). The united solution 
set corresponds to the situation when $M = \mbb{R}^n$, $P(x)$ is a predicate with 
the existential quantifiers ``$\exists$'' applied to all interval parameters of 
the system of equations. The equivalent set-theoretical representation of the united 
solution set is 
\begin{eqnarray} 
\Xi_{\mathit{uni}}\Ab \  & = & \  \bigcup_{A\in\mbf{A}}   \nonumber \ 
  \bigcup_{b\in\mbf{b}} \  \bigl\{\,x\in\mbb{R}^n \mid Ax = b\,\bigr\}  \\[3mm] 
  & = & \      \label{UniSolSet} 
  \bigcup_{A\in\mbf{A}} \  \bigl\{\,x\in\mbb{R}^n \mid Ax\in\mbf{b}\,\bigr\}, 
\end{eqnarray} 
where $\{\,x\in\mbb{R}^n \mid Ax\in\mbf{b}\,\}$ is, in fact, the solution set to 
the ``partial'' equation system $Ax = \mbf{b}$. 
  
The united solution set carefully takes into account the contributions of all point 
equation systems forming an interval system, by means of uniting their separate 
solutions together. Accordingly, the united solution set is subject to variability 
in the same extent as this variability is inherent to solutions of the individual 
point systems from the interval system under study. If the interval system of linear 
equations includes ill-conditioned or singular point systems, for which the solution 
varies greatly as the result of data perturbations, then the united solution set 
includes all these variations and will not play any stabilizing role. This will 
inevitably happen after intervalization of system \eqref{LinSystem1}--\eqref{LinSystem2} 
in case it is ill-conditioned. 
  
Overall, the united solution set is not really suitable for a stable solution 
of the system $Ax = b$: its stability is determined by solutions of the most unstable 
systems due to representation \eqref{UniSolSet}. Working with the united solution set 
requires an additional regularization procedure, e.\,g., such as that proposed 
by A.N.\,Tikhonov in \cite{Tikhonov}. We are going to develop another approach 
that relies on good properties of a specially selected solution set. 
  
First of all, we require that the solution set of an interval system should be 
constructed from the most stable solutions of point systems forming the interval 
system of equations. What is this solution set?\ldots We will not intrigue the reader 
and immediately announce the answer: among the solution sets for interval systems of 
equations, the ``most stable'' and, as a consequence, the most suitable for 
regularization purposes is the so-called tolerable solution set. 
  
\begin{definition} 
For the interval linear algebraic system \eqref{InLSystem1}--\eqref{InLSystem2}, 
the set 
\begin{equation} 
\label{TolSolSet}
\Xi_{\mathit{tol}}\Ab \  \stackrel{\text{def}}{=} \ 
  \bigl\{\, x\in\mbb{R}^n \mid(\forall A\in\mbf{A})(\exists b\in\mbf{b})(Ax = b)\,\bigr\}, 
\end{equation} 
is called \textit{tolerable solution set}. 
\end{definition} 
  
The tolerable solution set is composed of all such vectors $x\in\mbb{R}^n$ that 
the product $Ax$ falls into the interval of the right-hand side $\mbf{b}$ for any 
matrix $A\in\mbf{A}$. The definition \eqref{TolSolSet} can also be rewritten in 
the equivalent form 
\begin{equation*} 
\Xi_{\mathit{tol}}\Ab 
  = \bigl\{\, x\in\mbb{R}^n \mid (\forall A\in\mbf{A})(Ax\in\mbf{b})\,\bigr\}. 
\end{equation*} 
  
The presence of the condition ``$\forall A\in\mbf{A}$'' with the universal 
quantifier in the definition of the tolerable solution set results in the fact that 
the set-theoretic representation of $\Xi_{\mathit{tol}}\Ab$ uses the intersection 
over $A\in\mbf{A}$ rather than the union, as was the case with $\Xi_{uni}\Ab$. 
Therefore, instead of \eqref{UniSolSet}, we get 
\begin{eqnarray} 
\Xi_{\mathit{uni}}\Ab \  & = & \  \bigcap_{A\in\mbf{A}}   \nonumber \ 
  \bigcup_{b\in\mbf{b}} \  \bigl\{\,x\in\mbb{R}^n \mid Ax = b\,\bigr\} \\[3mm] 
  & = & \      \label{SetTheoRepr} 
  \bigcap_{A\in\mbf{A}} \  \bigl\{\,x\in\mbb{R}^n \mid Ax\in\mbf{b}\,\bigr\}. 
\end{eqnarray} 
  
The representation \eqref{SetTheoRepr} shows that the tolerable solution set is 
the least sensitive to changes in the matrix among all the solution sets of interval 
linear systems, since it is not greater than the ``most stable'' solution sets 
$\bigl\{\, x \in\mbb{R}^n \mid \tilde{A}x \in\mbf{b}\,\bigr\}$ determined by the matrix 
$\tilde{A}$ with the best condition number from $\mbf{A}$. Although some point matrices 
from $\mbf{A}$ may be poorly conditioned or even singular, their effect is compensated 
by the presence, in the same interval matrix, of ``good'' point matrices that make 
the tolerable solution set bounded and stable as a whole. 
  
The principal difference between the tolerable solution set and united solution set is 
expressed, in particular, in the fact that when the interval matrix $\mbf{A}$ widens, 
the united solution set of the system $\mbf{A}x = \mbf{b}$ expands too, while 
the tolerable solution set shrinks, i.\,e., decreases in size. 
  
To sum up, for the interval system of linear algebraic equations obtained after 
``intervalization'' of the initial ill-conditioned system, we shall consider 
the tolerable solution set $\Xi_{\mathit{tol}}\Ab$. We are interested in the points 
from it or its estimates. The problem of studying and estimating the tolerable solution 
set for interval linear systems of equations is called the interval linear tolerance 
problem \cite{SharyMCS,SharyARC,SharyBook}. We, therefore, need its solution, perhaps 
a partial one, which will be taken as a \emph{pseudo-solution} to the original equation 
system \eqref{LinSystem1}--\eqref{LinSystem2} or \eqref{InLSystem1}--\eqref{InLSystem2} 
instead of the ideal solution that may be unstable or even non-existing. 
  
At this point, we are confronted with a specific feature of the tolerable solution 
set to interval systems of equations: it is often empty, which can happen even for 
ordinary data. For system \eqref{InLSystem1}--\eqref{InLSystem2}, this is the case 
when the intervals of the right-hand sides $\mbf{b}_i$ are ``relatively narrow'' 
in comparison with intervals in the matrix $\mbf{A}$. Then the range of all possible 
products of $Ax$ for $A\in\mbf{A}$ exceeds the width of the ``corridor'' of the 
right-hand side $\mbf{b}$ into which this product should fit. 
  
For example, the tolerable solution set is empty for the one-dimensional interval 
equation $[1, 2]\,x = [3, 4]$. On the one hand, zero cannot be in the tolerable solution 
set, since $[3, 4]\not\ni 0$. On the other hand, a non-zero real number $t$ cannot be 
in the tolerable solution set too, since the numbers from the range of $[1, 2]\,t$ 
can differ by a factor of two, whereas the right-hand side $[3, 4]$ can take only 
the difference of numbers by a factor of $4/3$. 
  
In order to make the tolerable solution set non-empty, we can artificially widen 
the right-hand side of the interval linear equation system, for example, uniformly 
with respect to the midpoints of the interval components. It is not difficult to realize 
that, with the help of such an expansion, we can always make the tolerable solution set 
non-empty. 
  
An alternative way is to consider not the tolerable solution set itself, but 
a quantitative measure of the solvability of the linear tolerance problem, and 
the points at which the maximum of this measure is reached will be declared 
pseudo-solutions. This approach is developed in Section~\ref{RecFuncSect} 
of the present work.

  
\section{Tolerable solution set \\*  for interval linear systems of equations} 
\label{TolSolSetSect}

The tolerable solution set was first considered in \cite{NudingWilhelm} under the name 
of \emph{restricted solution set}, which is possibly due to the fact that this set is 
usually much smaller than the common and well-studied united solution set. Both the united 
and tolerable solution sets are representatives of an extensive class of the so-called 
\emph{AE-solution sets} for interval systems of equations (see \cite{SharySurvey,SharyBook}). 
It is not difficult to show that the AE-solution sets are polyhedral sets, i.\,e., their 
boundaries are made up of pieces of hyperplanes. But the tolerable solution set for interval 
linear systems has even better properties: it is a convex polyhedral set in $\mbb{R}^n$ 
(see  \cite{Irene05,SharyMCS,SharyBook}), i.\,e., it can be represented as the intersection 
of finite number of closed half-spaces of $\mbb{R}^n$. 
  
\begin{example} 
Let us consider the interval linear system 
\begin{equation} 
\label{ExampleInSys} 
\arraycolsep=3pt 
\left( 
\begin{array}{ccc}
  2.8    & {[0, 2]} & {[0, 2]} \\[2pt] 
{[0, 2]} &   2.8    & {[0, 2]} \\[2pt] 
{[0, 2]} & {[0, 2]} &   2.8 
\end{array} 
\right) 
x = 
\left( 
\begin{array}{c}
{[-1, 1]} \\[2pt] 
{[-1, 1]} \\[2pt] 
{[-1, 1]} 
\end{array} 
\right), 
\end{equation} 
proposed in \cite{Reichmann} and later studied in \cite{Neumaier}. 
  
  
\begin{figure}[htb] 
\centering
\unitlength=1mm 
\begin{picture}(90,82) 
\put(0,2){\includegraphics[width=90mm]{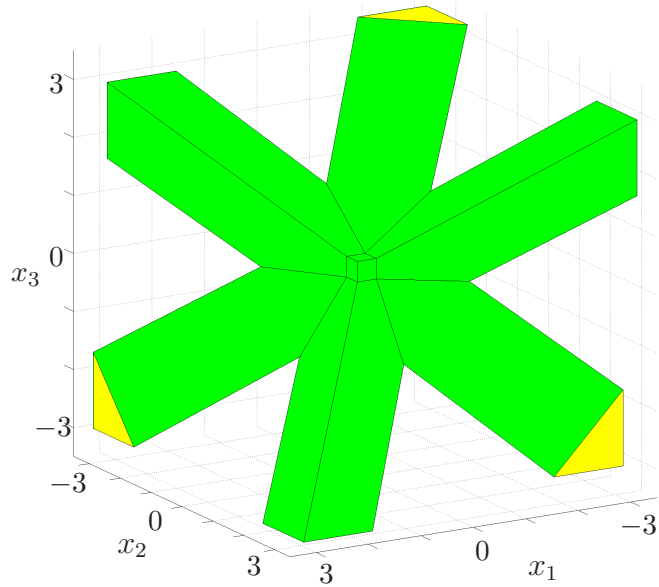}} 
\put(68,3){$x_1$} 
\put(14,6){$x_2$} 
\put(80,8.5){\small$-3$} 
\put(61,5){\small$0$} 
\put(40.5,2){\small$3$} 
\put(5,14){\small$-3$} 
\put(18,9){\small$0$} 
\put(31,3){\small$3$} 
\put(0,42){\small$x_3$} 
\put(3,21.5){\small$-3$} 
\put(5,44){\small$0$} 
\put(5,67){\small$3$} 
\end{picture} 
\caption{Unbounded united solution set to the interval system \eqref{ExampleInSys}.} 
\label{UniSolSetPic} 
\end{figure} 
  
  
For the value of the diagonal elements $3.5$ in the matrix of \eqref{ExampleInSys}, 
its united solution set is depicted at the jacket of the book \cite{Neumaier}. In our 
specific case, when the diagonal elements are equal to $2.8$, the interval matrix of
\eqref{ExampleInSys} contains singular point matrices, and the united solution set 
becomes unbounded. 
  
  
\begin{figure}[htb]
\centering
\unitlength=1mm
\begin{picture}(70,73)
\put(0,1){\includegraphics[width=70mm]{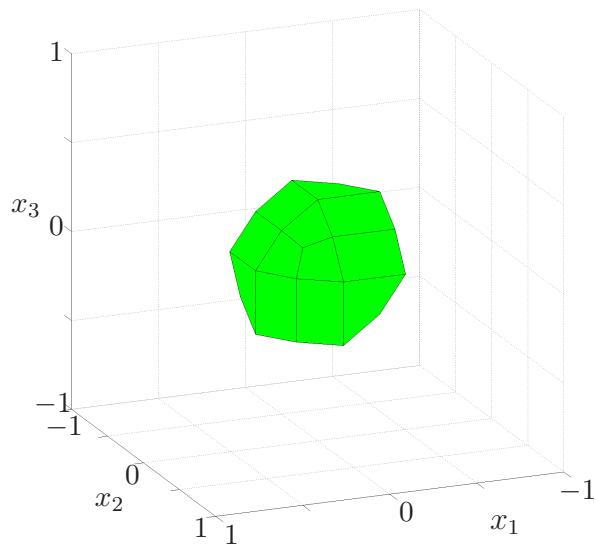}}
\put(58,3){$x_1$} 
\put(67,7){\small$-1$} 
\put(46,4){\small$0$} 
\put(23,1){\small$1$} 
\put(6,6){\small$x_2$} 
\put(-0.5,15.5){\small$-1$} 
\put(10,9){\small$0$} 
\put(19,2){\small$1$} 
\put(-5,45){\small$x_3$} 
\put(-2,18.5){\small$-1$} 
\put(0,42){\small$0$} 
\put(0,65){\small$1$} 
\end{picture}
\caption{Tolerable solution set to the interval system \eqref{ExampleInSys}.} 
\label{TolSolSetPic} 
\end{figure}
  
  
Nevertheless, both united solution set and tolerable solution set for the interval 
system \eqref{ExampleInSys} can be visualized with the use of the free software 
package \texttt{IntLinInc3D} \cite{IreneSoftware}, and their pictures are presented 
at Fig.~\ref{UniSolSetPic} and Fig.~\ref{TolSolSetPic}. The unbounded united solution 
set infinitely extends beyond the boundaries of the picture box through the light 
trimming faces at Fig.~\ref{UniSolSetPic}. However, the tolerable solution set to 
the system \eqref{ExampleInSys} is bounded and quite small (see Fig.~\ref{TolSolSetPic}). 
The reduction of the solution set, the pruning of its infinite parts, illustrates how 
efficiently the transition to the tolerable solution set ``regularizes'' the singular 
interval system \eqref{ExampleInSys}. 
\end{example} 
  
There exists several results that provide us with analytical descriptions of the tolerable 
solution sets to interval linear systems of equations. 
  
\begin{theorem} {\rm(the Rohn theorem \cite{Rohn85,RohnHandbook,SharyBook})} 
A point $x\in\mbb{R}^n$ belongs to the tolerable solution set of the interval 
$m\times n$-system of linear algebraic equations $\mbf{A}x = \mbf{b}\,$ if and only 
if $\,x = x' - x''$ for some vectors $\,x'$, $x''\in\mbb{R}^{n}$ that satisfy 
the following system of linear inequalities 
\begin{equation}
\label{RohnSystem} 
\arraycolsep 3pt 
\left\{ \ 
\begin{array}{rcr} 
 \ov{\mbf{A}}x' - \un{\mbf{A}}x'' & \leq &  \ov{\mbf{b}},\\[3pt] 
-\un{\mbf{A}}x' + \ov{\mbf{A}}x'' & \leq & -\un{\mbf{b}},\\[3pt] 
x',\, x'' & \geq & 0, 
\end{array}
\right.
\end{equation} 
where $\un{\mbf{A}}$, $\ov{\mbf{A}}$, $\un{\mbf{b}}$, $\ov{\mbf{b}}$ denote lower 
and upper endpoint matrices and vectors for $\mbf{A}$ and $\mbf{b}$ respectively. 
\end{theorem}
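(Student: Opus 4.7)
The plan is to reformulate the tolerance condition as an interval inclusion and then unfold the interval arithmetic in terms of the positive and negative parts of $x$. The key fact I would use is that, for a \emph{point} vector $x\in\mbb{R}^n$, the range $\{Ax\mid A\in\mbf{A}\}$ of the linear map coincides componentwise with the interval product $\mbf{A}x$ computed by interval arithmetic (since each $x_j$ appears only once in every row, the natural interval extension is exact). Hence $x\in\Xi_{\mathit{tol}}\Ab$ is equivalent to the componentwise inclusion $\mbf{A}x\subseteq\mbf{b}$, i.e.\ $\underline{\mbf{b}}_i\le\underline{(\mbf{A}x)_i}$ and $\overline{(\mbf{A}x)_i}\le\overline{\mbf{b}}_i$ for all $i$.

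Next I would write $x=x^{+}-x^{-}$, where $x^{+}_j=\max\{x_j,0\}$ and $x^{-}_j=\max\{-x_j,0\}$ are the nonnegative positive and negative parts. A direct case split on the sign of $x_j$ shows that for any interval $\mbf{a}_{ij}=[\underline{\mbf{a}}_{ij},\overline{\mbf{a}}_{ij}]$ one has
\begin{equation*}
\mbf{a}_{ij}x_j=\bigl[\,\underline{\mbf{a}}_{ij}x^{+}_j-\overline{\mbf{a}}_{ij}x^{-}_j,\;\overline{\mbf{a}}_{ij}x^{+}_j-\underline{\mbf{a}}_{ij}x^{-}_j\,\bigr].
\end{equation*}
Summing over $j$ gives explicit formulas for the lower and upper endpoints of $(\mbf{A}x)_i$, and substituting them into the inclusion $\mbf{A}x\subseteq\mbf{b}$ yields exactly the Rohn inequalities (\ref{RohnSystem}) with the canonical choice $x'=x^{+}$, $x''=x^{-}$. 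This proves the ``only if'' direction and supplies an explicit witness pair.

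For the ``if'' direction I need to allow arbitrary $(x',x'')\ge 0$ with $x=x'-x''$, not only the canonical decomposition. The observation is a monotonicity property: for each $j$, writing $t_j:=\min\{x'_j,x''_j\}\ge 0$, one has $x'_j=x^{+}_j+t_j$ and $x''_j=x^{-}_j+t_j$. Therefore
\begin{equation*}
\overline{\mbf{A}}x'-\underline{\mbf{A}}x''=\bigl(\overline{\mbf{A}}x^{+}-\underline{\mbf{A}}x^{-}\bigr)+\bigl(\overline{\mbf{A}}-\underline{\mbf{A}}\bigr)t,
\end{equation*}
and an analogous identity holds for the second block. Because $\overline{\mbf{A}}-\underline{\mbf{A}}\ge 0$ and $t\ge 0$, both extra terms are nonnegative, so the inequalities satisfied by $(x',x'')$ remain valid for $(x^{+},x^{-})$. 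Combining this with the first direction, $x^{+}$ and $x^{-}$ satisfy (\ref{RohnSystem}), hence $\mbf{A}x\subseteq\mbf{b}$ and $x\in\Xi_{\mathit{tol}}\Ab$.

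The main obstacle I anticipate is organising the bookkeeping cleanly: specifically, justifying that the interval evaluation $\mbf{A}x$ is sharp for a point $x$ (so the interval inclusion really captures the universal quantifier over $A\in\mbf{A}$), and making sure the monotone slack argument is carried out componentwise so that no sign assumption is implicitly placed on $x$. Everything else reduces to rearranging the explicit endpoint formulas for $\mbf{a}_{ij}x_j$.
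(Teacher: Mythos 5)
Your proof is correct: the reduction of the tolerance condition to the exact componentwise inclusion $\mbf{A}x\subseteq\mbf{b}$, the endpoint formula via $x=x^{+}-x^{-}$, and the slack argument $x'_j=x^{+}_j+t_j$, $x''_j=x^{-}_j+t_j$ with $(\ov{\mbf{A}}-\un{\mbf{A}})t\geq 0$ for the converse are all sound, and this is the standard argument for Rohn's theorem. The paper itself states this result without proof, merely citing Rohn and the surveys, so there is nothing in the text for your argument to diverge from.
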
 
  
\begin{theorem} {\rm(Irene Sharaya's theorem \cite{Irene05})} 
Let $\mbf{A}_{i:}$ be the $i$-th row of the interval $m\times n$-matrix $\mbf{A}$, 
and $\mathrm{vert}\;\mbf{A}_{i:}$ denotes the set of vertices of this interval vector, 
i.\,e., the set $\bigl\{\,(\tilde{a}_{i1},\ldots,\tilde{a}_{in}) \mid \tilde{a}_{ij}
\in\{\un{\mbf{a}}_{ij}, \ov{\mbf{a}}_{ij}\}, j = 1,2,\ldots,n\,\}$. For an interval 
system of linear algebraic equations $\mbf{A}x = \mbf{b}$, the tolerable solution 
set $\Xi_{\mathit{tol}}\Ab$ can be represented in the form 
\begin{equation} 
\label{IreneRepresntn} 
\Xi_{\mathit{tol}}\Ab \ 
= \  \bigcap_{i=1}^m \  \bigcap_{a\in\mathrm{vert}\,\mbf{A}_{i:}} 
  \{\, x\in\mbb{R}^n \mid ax \in\mbf{b}_{i}\}, 
\end{equation} 
i.\,e., as the intersection of hyperstrips $\{\,x\in\mbb{R}^n \mid ax \in\mbf{b}_{i}\}$. 
If $|M|$ means cardinality of a finite set $M$, then the number of hyperstrips in 
the intersection \eqref{IreneRepresntn} does not exceed $\,\sum^m_{i=1} |\,\mathrm{vert}
\;\mbf{A}_{i:}|$ and, a fortiori, does not exceed $m\cdot 2^n$. 
\end{theorem}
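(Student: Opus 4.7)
The plan is to reduce the universal quantifier over $\mbf{A}$ in the definition of $\Xi_{\mathit{tol}}$ first to independent quantifiers over each row, and then, within each row, from the interval box to its finite vertex set.

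First I would use that the interval matrix $\mbf{A}$ is the Cartesian product $\prod_{i,j}\mbf{a}_{ij}$, so its rows $\mbf{A}_{i:}$ can be chosen independently and without constraint from one another. Since the $i$-th component of $Ax$ equals $A_{i:}x$ and depends only on the $i$-th row of $A$, the condition ``$Ax\in\mbf{b}$ for all $A\in\mbf{A}$'' decouples into ``$ax\in\mbf{b}_i$ for every $i=1,\ldots,m$ and every $a\in\mbf{A}_{i:}$'', giving
\begin{equation*}
\Xi_{\mathit{tol}}\Ab \ = \ \bigcap_{i=1}^m \ \{\,x\in\mbb{R}^n\mid \forall\,a\in\mbf{A}_{i:},\ ax\in\mbf{b}_i\,\}.
\end{equation*}

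Next, I fix $x\in\mbb{R}^n$ and $i$ and observe that the map $a\mapsto ax = \sum_j a_j x_j$ is linear in $a$. Because $\mbf{A}_{i:}$ is a box whose extreme points are exactly $\mathrm{vert}\;\mbf{A}_{i:}$, every $a\in\mbf{A}_{i:}$ can be written as a convex combination of vertices, so $ax$ is the same convex combination of the vertex values $vx$ for $v\in\mathrm{vert}\;\mbf{A}_{i:}$. Hence the range $\{\,ax\mid a\in\mbf{A}_{i:}\,\}$ equals the convex hull of the finite set $\{\,vx\mid v\in\mathrm{vert}\;\mbf{A}_{i:}\,\}$, i.\,e., the interval $[\,\min_v vx,\,\max_v vx\,]$.

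Then I would invoke convexity of the interval $\mbf{b}_i$: the containment $\{\,ax\mid a\in\mbf{A}_{i:}\,\}\subseteq\mbf{b}_i$ holds iff $vx\in\mbf{b}_i$ for every $v\in\mathrm{vert}\;\mbf{A}_{i:}$. The forward implication is trivial since vertices lie in the box; for the converse, any convex combination of points of $\mbf{b}_i$ stays in $\mbf{b}_i$ by convexity. Substituting this equivalence row by row converts the intersection above into precisely the representation \eqref{IreneRepresntn}. The cardinality bound is then bookkeeping: each row $\mbf{A}_{i:}$ has at most $2^n$ vertices (two endpoint choices per coordinate, fewer if some $\mbf{a}_{ij}$ degenerate to points), so $\sum_{i=1}^m |\,\mathrm{vert}\;\mbf{A}_{i:}|\le m\cdot 2^n$. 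The only delicate point is the first step, which relies essentially on $\mbf{A}$ being an axis-aligned box in $\mbb{R}^{m\times n}$; once the row-wise decoupling is in hand, the remaining steps are routine applications of linearity and convexity.
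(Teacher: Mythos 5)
Your argument is correct: the row-wise decoupling (valid because $\mbf{A}$ is an axis-aligned box, so each row ranges independently over the box $\mbf{A}_{i:}$), the reduction from the box to its vertices via linearity of $a\mapsto ax$ and convexity of the interval $\mbf{b}_i$, and the counting of at most $2^n$ vertices per row are all sound, and you correctly identify the decoupling as the one step that genuinely uses the box structure. The paper itself states this theorem without proof, citing \cite{Irene05}, so there is no in-text argument to compare against; your proof is the natural and standard one for this result.
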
 
  
Each of the inclusions $ax\in\mbf{b}_{i}$ for $a\in\mbf{A}_{i:}$ is equivalent to 
a two-sided linear inequality 
\begin{equation*} 
\un{\mbf{b}}_{i}\leq a_{i1}x_1 + a_{i2}x_2 + \ldots + a_{in}x_n \leq\ov{\mbf{b}}_{i},  
\end{equation*} 
which really determines a hyperstrip in $\mbb{R}^n$, i.\,e., a set between two parallel 
hyperplanes. Therefore, Irene Sharaya's theorem gives a representation of the tolerable 
solution set as the set of solutions to a finite system of two-sided linear inequalities 
whose coefficients are endpoints of the interval elements from $\mbf{A}_{i:}$, $i = 1,2, 
\ldots,m$. The remarkable fact is that the number of inequalities implied by the 
representation \eqref{IreneRepresntn} is considerably less than the overall number of 
``endpoint inequalities'' of the interval linear system which is equal to $2^{m(n+1)}$. 
  
  
\begin{figure}[ht] 
\label{IreneTheoPic}
\centering 
\unitlength 1mm 
\begin{picture}(96,78) 
\put(0,0){\includegraphics[width=88mm]{TolStrips.eps}}
\put(23.3,29.5){\scriptsize 4}
\put(22.3,65){\scriptsize 13}
\put(16.3,15.3){\scriptsize -2}
\put(22.5,6){\scriptsize -2}
\put(41,15.3){\scriptsize 4}
\put(75.5,15.3){\scriptsize 13}
\put(48,59){\color{blue}|1|}
\put(59,10){\color{blue}|2|}
\put(38,6){\color{blue}|3|}
\put(6,24){\color{blue}|4|}
\put(68,44){{\color{blue}|$i$|}\, indicates a strip}
\put(68,39){\parbox[t]{60mm}{that corresponds to the solution\\ 
of the $i$-th row of the system \eqref{IreneSystem}}}
\put(82,11.7){$x_1$}
\put(27,72){$x_2$}
\put(37,32){\Large{$\Xi_\mathit{tol}$}} 
\end{picture} 
\caption{Constructing the tolerable solution set} 
according to Irene Sharaya's theorem. 
\end{figure} 
  
  
\begin{example} 
For the interval linear equation system 
\begin{equation} 
\label{IreneSystem}
\arraycolsep=2mm 
\left(\begin{array}{rr}
-2 & 1 \\[2pt] 1 & 1 \\[2pt] 1 & 0 \\[2pt] -1 & 2 
\end{array}\right)
\begin{pmatrix}x_1 \\[2pt] x_2\end{pmatrix} = 
\left(\begin{array}{@{\,}c@{\,}} 
{[-8, 4]}\\[2pt] {[4, 13]}\\[2pt] {[1, 7]}\\[2pt] {[-1, 19]} 
\end{array}\right), 
\end{equation}
the tolerable solution set can be constructed in the way depicted 
at Fig.~\ref{IreneTheoPic} which is borrowed from \cite{Irene05}. 
\end{example} 
  
From Irene Sharaya's theorem, it follows that, within the given interval matrix, 
the point matrix with the best condition number is necessarily an endpoint (``corner'') 
matrix. We saw an illustration of this fact in Example~\ref{InflaCondExmp} 
in Section \ref{ImplemIdeaSect}. 
  
As far as the solution of a system of linear inequalities is computable in polynomial 
time depending on the size of the problem (see, e.\,g., \cite{Khachiyan,Schrijver}), 
the Rohn theorem implies that, in general, the recognition of whether the tolerable 
solution set is empty or not empty is a polynomialy solvable problem too. 
  
Over the last decades, several approaches have been developed to study the tolerable 
solution set and to compute its estimates. These are: 
\begin{itemize} 
\item 
Application of systems of linear inequalities from theorems of Jiri Rohn and 
Irene Sharaya. 
\item 
Formal algebraic approach. 
Estimation of the tolerable solution set reduces to computing so-called formal 
(algebraic) solutions for a special interval linear system of the same form. 
\item 
The method of the recognizing functional. The tolerable solution set is represented 
as a level set of a special function called recognizing functional, and we study 
the problem by using the functional, its values and their sign. 
\end{itemize} 
  
Following the author's earlier ideas, a technique based on correction and further 
solution of the system of linear inequalities \eqref{RohnSystem} has been developed 
in \cite{PanyukovGolodov}. In our paper, we are going to elaborate the second and 
the third approaches which use purely interval technique and work directly with 
the interval system of equations. 
  
  
\section{Recognizing functional and its application} 
\label{RecFuncSect}

To go further and make our article self-sufficient, we need to recall some fundamental 
concepts and facts from interval analysis. 
  
The main instrument of interval analysis is so-called \emph{interval arithmetics}, 
algebraic systems that formalize common operations between entire intervals of the real 
line $\mbb{R}$ or other number fields. In particular, the classical interval arithmetic 
$\mbb{IR}$ is an algebraic system formed by intervals $\,\mbf{x}\; =\; [\,\un{\mbf{x}}, 
\ov{\mbf{x}}\,]\,\subset\,\mathbb{R}\,$ so that, for any arithmetic operation ``$\star$'' 
from the set $\{\,+\,,-\,,\,\cdot\,,/\,\}$, the result of the operation between 
the intervals is defined ``by representatives'', i.\,e., as 
\begin{equation*} 
\mbf{x}\star\mbf{y}\, = \,\bigl\{\; x\star y \ \mid \ x\in\mbf{x},\,y\in\mbf{y}\;\bigr\}. 
\end{equation*} 
The above formula is mainly of a theoretical nature, being hardly applicable for actual 
computations. Expanded constructive formulas for the interval arithmetic operations are 
as follows \cite{GMayer,MooreBakerCloud,Neumaier,SharyBook}: 
\begin{gather*}
\mbf{x} + \mbf{y} = 
  \bigl[\,\un{\mbf{x}}+\un{\mbf{y}}, \ \ov{\mbf{x}}+\ov{\mbf{y}}\,\bigr], \qquad 
\mbf{x} - \mbf{y} =  
  \bigl[\,\un{\mbf{x}}-\ov{\mbf{y}}, \ \ov{\mbf{x}}-\un{\mbf{y}}\,\bigr], \\[3pt]
\mbf{x}\cdot\mbf{y} =
  \bigl[\,\min\{\un{\mbf{x}}\,\un{\mbf{y}},\un{\mbf{x}}\,\ov{\mbf{y}},
  \ov{\mbf{x}}\,\un{\mbf{y}},\ov{\mbf{x}}\,\ov{\mbf{y}}\}, \ 
  \max\{\un{\mbf{x}}\,\un{\mbf{y}},\un{\mbf{x}}\,\ov{\mbf{y}},
  \ov{\mbf{x}}\,\un{\mbf{y}},\ov{\mbf{x}}\,\ov{\mbf{y}}\}\,\bigr],        \\[3pt]
\mbf{x}/\mbf{y} = \mbf{x}\cdot 
  \bigl[\,1/\ov{\mbf{y}}, \ 1/\un{\mbf{y}}\,\bigr]\qquad\mbox{ for }\ \mbf{y}
  \not\ni 0. 
\end{gather*} 
  
We start our consideration from the following characterization result for the points 
from the tolerable solution set (see \cite{SharyMCS,SharyARC,SharyBook}):  for 
an interval system of linear algebraic equations $\mbf{A}x = \mbf{b}\,$, the point 
$x\in\mbb{R}^n$ belongs to the solution set $\Xi_\mathit{tol}\Ab$  if and only if 
\begin{equation}
\label{TolCharact} 
\mbf{A}\cdot x \,\subseteq\, \mbf{b}, 
\end{equation} 
where ``$\,\cdot\,$'' means the interval matrix multiplication. The validity of this 
characterization follows from the properties of interval matrix-vector multiplication 
and the definition of the tolerable solution set. We are going to reformulate the 
inclusion \eqref{TolCharact} as an inequality, in order to be able to apply results 
of the traditional calculus. 
  
If $\mbf{A} = (\mbf{a}_{ij})$, then, instead of \eqref{TolCharact}, we can write 
\begin{equation*}
\sum_{j=1}^n \mbf{a}_{ij} x_{j} \subseteq \mbf{b}_{i}, \qquad i = 1,2,\ldots,m, 
\end{equation*} 
due to the definition of the interval matrix multiplication. Next, we represent 
the right-hand sides of the above inclusions as the sums of midpoints $\m\mbf{b}_{i}$ 
and intervals $\bigl[-\r\mbf{b}_{i}, \r\mbf{b}_{i}\bigr]$ which are symmetric with 
respect to zero (``balanced''): 
\begin{equation*} 
\sum_{j=1}^n \mbf{a}_{ij} x_{j} \;\subseteq\; \m\mbf{b}_{i} 
  + \bigl[-\r\mbf{b}_{i}, \r\mbf{b}_{i}\bigr],  \qquad i = 1,2,\ldots,m. 
\end{equation*}
Then, adding $(-\m\mbf{b}_{i})$ to both sides of the inclusions, we get 
\begin{equation*} 
\sum_{j=1}^n \mbf{a}_{ij} x_{j} \; - \m\mbf{b}_{i} \;\subseteq\; 
  \bigl[-\r\mbf{b}_{i}, \r\mbf{b}_{i}\bigr],  \qquad i = 1,2,\ldots,m. 
\end{equation*} 
  
The inclusion of an interval into the balanced interval $\bigl[-\r\mbf{b}_{i}, 
\r\mbf{b}_{i}\bigr]$ can be equivalently rewritten as the inequality on the absolute 
value: 
\begin{equation*} 
\left|\;\sum_{j=1}^n \mbf{a}_{ij} x_{j} \; - \m\mbf{b}_{i}\,\right|\; 
  \leq \;\r\mbf{b}_{i},  \qquad i = 1,2,\ldots,m, 
\end{equation*}
which implies 
\begin{equation*} 
\r\mbf{b}_{i} - \left|\;\sum_{j=1}^n \mbf{a}_{ij} x_{j} \; - \m\mbf{b}_{i}\,\right|\; 
  \geq \ 0,  \qquad i = 1,2,\ldots,m. 
\end{equation*} 
Therefore, 
\begin{equation*}
\mbf{A}x \subseteq\mbf{b} \quad\Longleftrightarrow\quad 
   \r\mbf{b}_{i} - \left|\,\m\mbf{b}_{i} - \sum_{j=1}^n \mbf{a}_{ij} x_{j}\,\right| \geq 0 
   \quad \text{ for each } \  i = 1,2,\ldots,m. 
\end{equation*}
Finally, we can convolve, over $i$, the conjunction of the inequalities in the right-hand 
side of the logical equivalence obtained: 
\begin{equation*}
\mbf{A}x \subseteq\mbf{b} \quad\Longleftrightarrow\quad 
\min_{1\leq i\leq m} \left\{ \  \r\mbf{b}_{i} - 
  \left|\,\m\mbf{b}_{i} - \sum_{j=1}^n \mbf{a}_{ij} x_{j}\,\right|\; \right\}\, \geq\, 0.
\end{equation*} 
  
We have arrived at the following result 
  
\begin{theorem} 
Let $\mbf{A}$ be an interval $m\times n$-matrix, $\mbf{b}$ be an interval $m$-vector. 
Then the expression 
\begin{equation*}
\Tol(x, \mbf{A}, \mbf{b})\,  = \,\min_{1\leq i\leq m}
  \left\{ \,\r\mbf{b}_i - \left|\; \m\mbf{b}_i - \sum_{j=1}^n 
  \,\mbf{a}_{ij} x_{j} \,\right| \,\right\} 
\end{equation*}
defines a mapping $\Tol : \mbb{R}^{n}\times\mbb{IR}^{m\times n}\times\mbb{IR}^{m} 
\rightarrow\mathbb{R}$, such that the memebership of a~point $\,x\in\mbb{R}^n$ in the 
tolerable solution set $\,\Xi_\mathit{tol}\Ab$ of an interval system of linear 
algebraic equation $\mbf{A}x = \mbf{b}$ is equivalent to that the mapping Tol is 
nonnegative in the point $x$, i.\,e. 
\begin{equation*}
x\in\Xi_{\mathit{tol}} \Ab
 \qquad\Longleftrightarrow\qquad
 \Tol (x, \mbf{A}, \mbf{b}) \geq 0 .
\end{equation*} 
\end{theorem}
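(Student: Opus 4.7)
The plan is to chain together two equivalences: first, that membership in $\Xi_{\mathit{tol}}\Ab$ coincides with the single interval inclusion $\mbf{A}\cdot x\subseteq\mbf{b}$, and second, that this inclusion is equivalent to the nonnegativity of the expression defining $\Tol$. Both ingredients are essentially assembled in the derivation immediately preceding the theorem, so the proof is really a matter of organizing those computations into a clean equivalence chain.

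First I would invoke the characterization \eqref{TolCharact} just established in the text: $x\in\Xi_{\mathit{tol}}\Ab$ if and only if $\mbf{A}\cdot x\subseteq\mbf{b}$. By the definition of interval matrix-vector multiplication applied row by row, this single inclusion is equivalent to the family of scalar interval inclusions
\begin{equation*}
\sum_{j=1}^n \mbf{a}_{ij}\,x_j\;\subseteq\;\mbf{b}_i,\qquad i=1,2,\ldots,m.
\end{equation*}
Here it matters that the $x_j$ are real (not interval), so each left-hand side is a genuine interval rather than something set-valued in a subtler sense.

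Next I would perform the midpoint-radius rewriting sketched in the excerpt: split $\mbf{b}_i=\m\mbf{b}_i+[-\r\mbf{b}_i,\r\mbf{b}_i]$ and subtract the real number $\m\mbf{b}_i$ from both sides of the $i$-th inclusion, producing an interval on the left that must lie inside the balanced interval $[-\r\mbf{b}_i,\r\mbf{b}_i]$. The key elementary step to justify is the fact that an interval $\mbf{c}$ satisfies $\mbf{c}\subseteq[-r,r]$ if and only if $|\mbf{c}|\leq r$, where $|\mbf{c}|=\max\{|\un{\mbf{c}}|,|\ov{\mbf{c}}|\}$ is the interval magnitude; this follows at once from the definition of $\subseteq$ on intervals. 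Applying this, the $i$-th inclusion becomes the real inequality
\begin{equation*}
\r\mbf{b}_i-\left|\,\m\mbf{b}_i-\sum_{j=1}^n \mbf{a}_{ij}\,x_j\,\right|\;\geq\; 0.
\end{equation*}

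Finally, the conjunction of the $m$ resulting inequalities is equivalent to the nonnegativity of their minimum, which is by definition $\Tol(x,\mbf{A},\mbf{b})$. Combining the equivalences gives the claim. The only step that is not purely bookkeeping is the interval-magnitude reformulation, so that is where I expect the main (minor) obstacle to lie; in particular, one must be careful that $\sum_{j=1}^n\mbf{a}_{ij}x_j-\m\mbf{b}_i$ is treated as a single interval whose magnitude is a well-defined real number, and not confused with a pointwise absolute value of real combinations. Once this is made explicit, the theorem follows by simply stringing together the equivalences outlined above.
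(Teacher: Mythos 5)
Your proposal is correct and follows essentially the same route as the paper: it invokes the characterization $\mbf{A}\cdot x\subseteq\mbf{b}$, decomposes it rowwise, performs the midpoint--radius shift, converts the inclusion into a balanced interval into an inequality on the interval magnitude, and convolves the $m$ inequalities with a minimum. Your explicit remark that the absolute value must be read as the interval magnitude $\max\{|\un{\mbf{c}}|,|\ov{\mbf{c}}|\}$ is exactly the one non-trivial point, and you handle it correctly.
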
 
  
The tolerable solution set $\Xi_{\mathit{tol}}\Ab$ to an interval system of linear 
algebraic equations is thus a ``level set'' 
\begin{equation*} 
\bigl\{\; x\in\mathbb{R}^{n} \mid \Tol(x, \mbf{A}, \mbf{b}) \geq 0\,\bigr\} 
\end{equation*} 
of the mapping Tol with respect to the first argument $x$ under fixed $\mbf{A}$ and 
$\mbf{b}$. We will call this mapping \emph{recognizing functional} of the tolerable 
solution set, since the values of Tol are in the real line $\mbb{R}$ and their sign 
``recognizes'' the membership of a point in the set $\Xi_{\mathit{tol}}\Ab$. Below, 
we outline briefly the properties of the recognizing functional, and their detailed 
proofs can be found in \cite{SharyMCS,SharyARC,SharyBook}. 
  
  
\begin{figure}[htb]
\centering
\unitlength=1mm
\begin{picture}(100,77)
\put(0,1){\includegraphics[width=100mm]{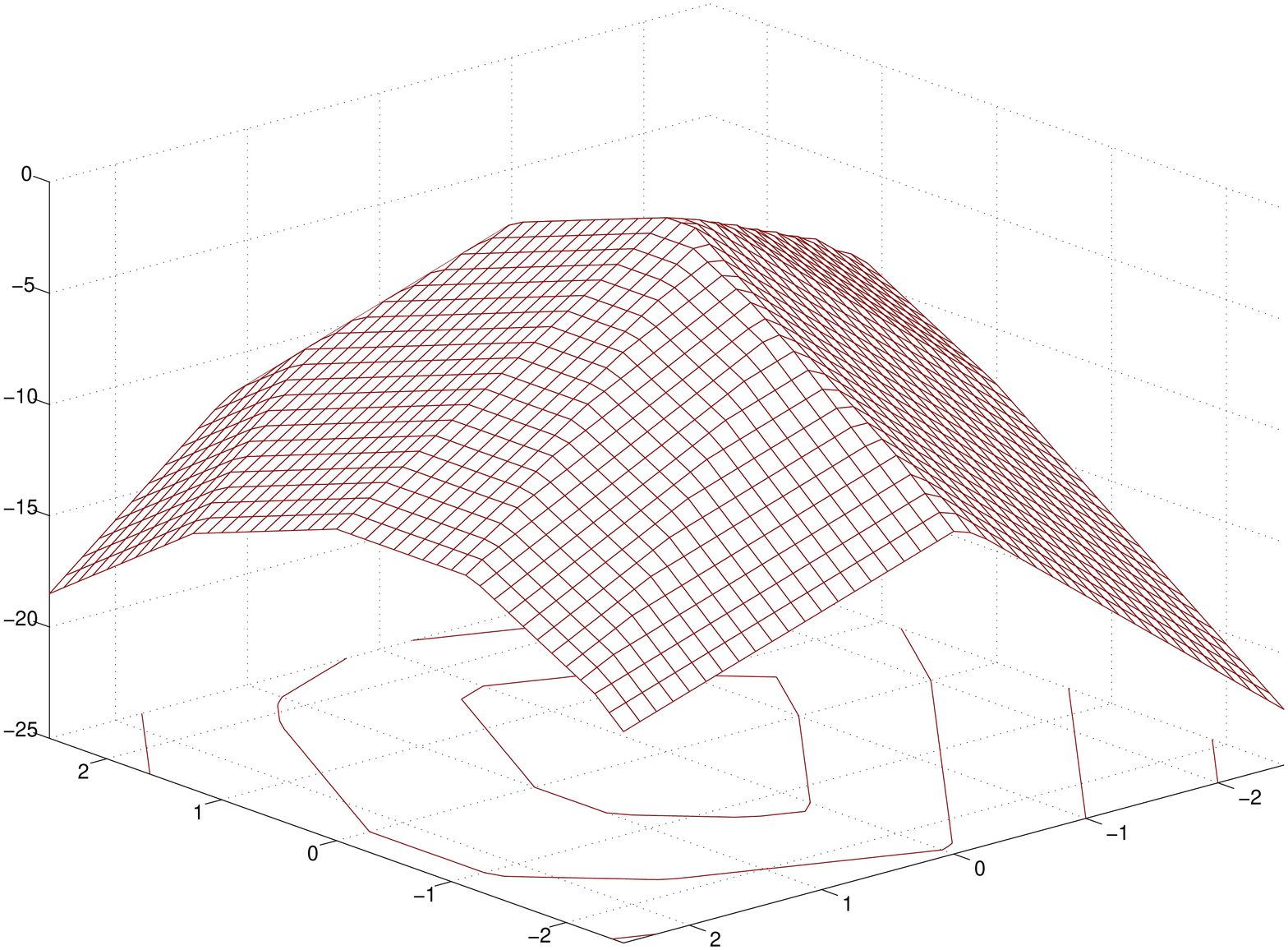}}
\put(78,5){$x_2$} 
\put(16,7){$x_1$} 
\put(-6,25){\rotatebox{90}{\small Functional values}} 
\end{picture}
\caption{Graph of the recognizing functional of the tolerable}
solution set to the interval system \eqref{ExampleInSys}. 
\label{TolGraphPic} 
\end{figure}
  
  
First of all, the functional Tol is continuous function of its arguments, which 
follows from the form of the expression that determines Tol. Moreover, Tol is 
continuous in a stronger sense, namely, it is Lipschitz continuous. At the same time, 
the functional Tol is not everywhere differentiable due to the operation ``min'' 
in its expression and ``non-smooth'' character of interval arithmetic operations. 
  
The functional $\Tol (x, \mbf{A}, \mbf{b})$ is polyhedral, that is, its hypograph 
is a polyhedral set, while its graph is composed of pieces of hyperplanes. 
  
The functional Tol is concave in the variable $x\,$ over the entire space 
${\mathbb{R}}^{n}$. Finally, the functional $\Tol (x, \mbf{A}, \mbf{b})$ attains 
a finite maximum over the whole space $\mbb{R}^{n}$. 
  
\begin{example} 
Fig.~\ref{TolGraphPic} shows the graph of the recognizing functional for the tolerable 
solution set to the interval equation system 
\begin{equation} 
\label{4x2InSystem} 
\left( 
\begin{array}{cc} 
{[-2, 0]} & {[-4,2]} \\[2pt] 
{[-3, 2]} & {[2, 3]} \\[2pt] 
{[3, 4]}  & {[4, 5]} \\[2pt] 
{[3, 5]}  & {[-2, 2]} 
\end{array}
\right) 
\,
\begin{pmatrix}
x_1 \\[2pt]  x_2 
\end{pmatrix} 
= 
\left( 
\begin{array}{c} 
{[1, 2]}  \\[2pt] 
{[-2, 0]} \\[2pt] 
{[0, 4]}  \\[2pt] 
{[-2, 3]} 
\end{array}
\right)
\end{equation} 
Polyhedral structure and nonsmoothness of the functional Tol are clearly seen 
at the picture. Also, polygons in the plane $0x_{1}x_{2}$ at Fig.~\ref{TolGraphPic} 
are level sets for various values of the level. 
\end{example} 
  
If $\Tol (x, \mbf{A}, \mbf{b}) > 0 $, then $\,x$ is a point of the topological interior 
$\mathrm{int}\;\Xi_{\mathit{tol}}\Ab$ of the tolerable solution set. It make sense 
to clarify that an interior point is a point that belongs to a set together with a ball 
(with respect to some norm) centered at this point. Therefore, interior points remain 
within the set even after small perturbations, and this fact may turn out important 
in practice. 
  
The converse is also true. Let an interval $m\times n$-system of linear algebraic 
equations $\mbf{A}x = \mbf{b}$ be such that, for every index $i = 1,2,\ldots, m$, 
there exists at least one nonzero element in the $i$-th row of the matrix $\mbf{A}$ 
or the respective right-hand side interval $\mbf{b}_i$ does not have zero endpoints. 
Then the membership $\,x\in\mathrm{int}\;\Xi_{\mathit{tol}}\Ab$ implies the strict 
inequality $\,\Tol (x, \mbf{A}, \mbf{b}) > 0$. 
  
As a consequence of the above results, we are able to perform, using the recognizing 
functional, a study of whether the tolerable solution set to an interval linear 
algebraic system is empty/nonempty. This can done according to the following procedure. 
For the interval system $\mbf{A}x = \mbf{b}\,$, we solve an unconstrained maximization 
problem for the recognizing functional $\Tol(x,\mbf{A},\mbf{b})$, that is, we compute 
$\max_{x\in\mbb{R}^n} \Tol(x,\mbf{A}, \mbf{b})$. Let $T = \max\,\Tol$, and 
it is attained at the point $\tau\in\mbb{R}^n$. Hence, 
\begin{itemize}
\item[$\bullet$]  
if $\,T\geq 0$, then $\tau\in\Xi_{\mathit{tol}}\Ab\neq\varnothing$, i.\,e., 
the tolerable solution set to the system $\mbf{A}x = \mbf{b}$ is not empty and $\tau$ 
lies inside it;
\item[$\bullet$]  
if $\,T > 0$, then $\tau\in\mathrm{int}\,\Xi_{\mathit{tol}}\Ab\neq\varnothing$, i.\,e., 
the tolerable solution set has nonempty interior and the point $\tau$ is an interior one; 
\item[$\bullet$]  
if $\;T < 0$, then $\Xi_{\mathit{tol}}\Ab = \varnothing$, i.\,e., the tolerable 
solution set to the interval equation system $\mbf{A}x = \mbf{b}$ is empty. 
\end{itemize} 
  
\begin{example} 
For the tolerable solution set to the interval linear system \eqref{4x2InSystem}, 
the graph of the recognizing functional (Fig.~\ref{TolGraphPic}) does not reach 
the zero level, all its values are negative. Hence, the tolerance problem is not 
solvable. 
  
Using the program \texttt{tolsolvty} (see below), one can compute more specific results: 
\begin{equation*} 
\max\;\Tol = -1, \qquad \mathrm{arg}\,\max\;\Tol = (-0.21294, 0)^{\top}.
\end{equation*} 
A more thorough investigation shows that, around the maximum of the functional, 
there is an entire small plateau of the constant level $-1$ (one can discern it 
in Fig.~\ref{TolGraphPic}), and the maximization method can converge to different 
points of this plateau from different initial approximations. 
\end{example} 
  
Even if the tolerable solution set is empty, the maximal value of the recognizing 
functional, $T = \max_{x\in\mbb{R}^n} \Tol(x,\mbf{A}, \mbf{b})$, can serve as a measure 
of unsolvabilty of the tolerance problem for the interval linear system. At the same time, 
the argument that delivers maximum to Tol is the ``most promising'' point with respect 
to the tolerance solvability or, in other words, it is the ``least unsolvable''. Let us 
clarify this assertion. 
  
First of all, note that widening the right-hand side vector leads to expansion of the 
tolerable solution set, i.\,e., it increases the solvability of the tolerance problem, 
while narrowing the right-hand side leads to reduction of the tolerable solution set, 
i.\,e., decreases the solvability of the interval tolerance problem. Consequently, 
the value of the coordinated contraction of the right-hand sides to the point at which 
the tolerable solution set becomes empty can be taken as a solvability measure for the 
interval linear tolerance problem. Conversely, the minimal value of the coordinated 
expansion of the intervals in the right-hand sides, under which the tolerable solution 
set becomes nonempty, characterizes an ``unsolvability measure'' of the tolerance problem. 
Similar natural considerations are widely used in interval data fitting (see, e.\,g., 
\cite{JangBaker,Zhilin}). The ``coordinated'' expansion or narrowing of the data intervals 
is usually understood as uniform expansion or contraction relative to their centers. 
  
The point (or points) that first appears in non-empty tolerable solution set during 
the uniform expansion of the right-hand side intervals is of special interest to us, 
since it delivers the smallest ``incompatibility'' to the interval tolerance problem. 
So, this point (or points) can be taken as a pseudo-solution of the original equation 
system. 
  
The remarkable fact is that the argument of $\max\Tol$ is the first point that appears 
in the non-empty tolerable solution set after uniform, with respect to its midpoint, 
widening of the right-hand side vector. To substantiate it, let us look at the expression 
for the recognizing functional Tol: 
\begin{equation*}
\Tol (x,\mbf{A},\mbf{b})\; = \,\min_{1\leq i\leq m} 
  \left\{\, \r\mbf{b}_i - \left|\;\m\mbf{b}_i - \sum_{j = 1}^n \,\mbf{a}_{ij} x_{j} 
  \,\right|\,\right\}. 
\end{equation*} 
The quantities $\r\mbf{b}_i$ enter as addons in all subexpressions over which we take 
$\min_{1\leq i\leq m}$ when calculating the final value of the functional. Therefore, 
if we denote 
\begin{equation*} 
\mbf{e} = \bigl([-1, 1], \ldots, [-1, 1] \bigr)^{\top}, 
\end{equation*} 
i.\,e., the interval vector with the radii of all components equal to $1$, then 
the system $\mbf{A}x = \mbf{b} + C\mbf{e}\,$ has the widened right-hand sides and 
their radii become $\r\mbf{b}_{i} + C$, $i = 1,2,\ldots,m$. We thus have 
\begin{equation*}
\Tol (x,\mbf{A},\mbf{b} + C\mbf{e}) = \Tol (x,\mbf{A},\mbf{b}) + C. 
\end{equation*} 
Consequently, 
\begin{equation*}
\max_{x}\;\Tol (x,\mbf{A},\mbf{b} + C\mbf{e}) = \max_{x}\;\Tol(x,\mbf{A},\mbf{b}) + C, 
\end{equation*} 
which proves our assertion. 
  
We can see that the values of the recognizing functional at a point give a quantitative 
measure of the compatibility of this point with respect to the tolerable solution set 
of a given interval linear system. Consequently, the argument of the maximum of the 
recognizing functional, no matter whether it belongs to a nonempty tolerable solution 
set or not, corresponds to the maximum tolerance compatibility for a given interval 
linear system. That is why we regard it as a pseudo-solution of the original system 
of linear algebraic equations, to which interval regularization is applied. 
  
Next, we consider the interesting question of what result will be produced by interval 
regularization for the case when the matrix of the system and its right-hand side vector 
are specified exactly, without errors and uncertainty. 
  
If the matrix $\mbf{A}$ of the linear system and its right-hand side vector $\mbf{b}$ are 
point (non-interval), i.\,e. $\mbf{A} = A = (a_{ij})$ and $\mbf{b} = b = (b_{i})$, then 
\begin{equation*}
\r\mbf{b}_{i} = 0, \hspace{18mm} \m\mbf{b}_{i} = b_{i}, \hspace{18mm} \mbf{a}_{ij} = a_{ij} 
\end{equation*} 
for all $i, j$. The recognizing functional of the solution set then takes the form 
\begin{align*}
\Tol(x, A,b) 
&= \min_{1\leq i\leq m}
  \left\{\, -\biggl|\,b_{i} - \sum_{j=1}^n a_{ij}x_j \,\biggr|\,\right\} \ 
 = \  -\max_{1\leq i\leq m} \  \biggl|\,b_{i} - \sum_{j=1}^n a_{ij}x_j \,\biggr| \\[3mm] 
&= \ -\max_{1\leq i\leq m} \;\left|\,\bigl(Ax)_{i} - b_{i}\,\right|              \\[3mm] 
&= \ -\left\| Ax - b\,\right\|_{\infty}. 
\end{align*} 
Through $\|\cdot\|_\infty$, we denote Chebyshev norm ($\infty$-norm) of a vector 
in the finite-dimensional space $\mbb{R}^m$, which is defined as $\|y\|_{\infty} = 
\max_{1\leq i\leq m}\,|y_{i}|$. Then 
\begin{equation*}
\max\;\Tol(x)\, = \,\max_{x\in\mbb{R}^n}\;\bigl(-\| Ax - b\,\|_{\infty}\bigr)\, 
  = -\min_{x\in\mbb{R}^n}\; \| Ax - b\,\|_{\infty}, 
\end{equation*} 
insofar as $\,\max\,(-f(x)) = -\min\, f(x)$. In this particular case, the maximization 
of the recognizing functional is equivalent, therefore, to minimizing the Chebyshev norm 
of defect of the solution, very popular in data processing. 
  
In practice, the maximization of the recognizing functional can be performed with 
the use of nonsmooth optimization methods that have been greatly developed in the 
last decades. The author used for this purpose the so-called $r$-algorithms, invented 
by Naum Shor \cite{ShorZhurb} and later elaborated in V.M.\,Glushkov Institute of Cybernetics 
of the National Academy of Sciences of Ukraine \cite{StetsyukBook,StetsyukJCT}. Based on 
the computer code \texttt{ralgb5} created by Petro Stetsyuk, a free program \texttt{tolsolvty} 
has been written for Scilab and \textsc{Matlab},  available at \cite{IntervalAnalysis}. 
Our computational experience shows that \texttt{tolsolvty} works satisfactorily for 
the linear systems having the condition number which is not large. One more possibility 
of implementation of the approach can be based on the separating planes algorithms 
of non-smooth optimization, proposed in \cite{Nurminski,VorontsovaLNCS,VorontsovaJCT, 
VorontsovaIEEE}. 
  
To summarize, in the interval regularization method for the system of linear algebraic 
equations \eqref{LinSystem1}--\eqref{LinSystem2}, the matrix $A$ ``inflates'' by a small 
value to result in an interval matrix $\mbf{A}$. In particular, if the equation system 
is determined imprecisely, then the intervalization of $A$ to $\mbf{A}$ can be carried 
out based on the information of the accuracy to which the elements of $A$ and $b$ are 
given. We thus get an interval system of linear algebraic equations $\mbf{A}x = \mbf{b}$ 
with $\mbf{A}\ni A$ and $\mbf{b}\ni b$. Then we compute numerically unconstrained 
maximum, with respect to $x$, of the recognizing functional $\Tol(x,\mbf{A},\mbf{b})$ 
of the tolerable solution set for the interval linear system $\mbf{A}x = \mbf{b}$. 
The argument of the maximum value of Tol is the sought-for pseudosolution 
to the equation system \eqref{LinSystem1}--\eqref{LinSystem2}.

  
\section{Formal (algebraic) approach}

Yet another way of estimating the tolerable solution set to interval systems 
of equations is the \emph{formal approach} (sometimes called \emph{algebraic}). 
It consists in replacing the initial estimation problem with the problem of 
computing the so-called formal (algebraic) solution for a special interval 
equation or a system of equations. Based on the formal approach, we can propose 
one more version of the interval regularization procedure. 
  
\begin{definition}
An interval (interval vector, matrix) is called a \emph{formal solution} 
to the interval equation (system of equations, inequalities, etc.) if substituting 
this interval (interval vector, matrix) into the equation (system of equations, 
inequalities, etc.) and executing all interval arithmetic, analytic, etc., operations 
result in a valid relation. 
\end{definition}
  
The formal solutions correspond, therefore, to the usual general mathematical concept 
of a solution to an equation. Introduction of a special term for them in connection 
with interval equations has, rather, historical causes. Formal solutions turn out 
to be very useful in estimating various solution sets for interval systems of equations 
(see, e.\,g., \cite{SainzGardeJorba,ModalInAnal,SharyOneMore,SharySurvey,SharyBook}). 
The simplest result of this kind applies to the tolerable solution set and looks 
as follows: 
  
\begin{theorem} 
\label{InnEstTheo}
If an interval vector $\mbf{x}\in\mbb{IR}^n$ is a formal solution to the interval 
linear system $\mbf{A}x = \mbf{b}$, then $\mbf{x}\subseteq\Xi_{\mathit{tol}}\Ab$, 
that is, $\mbf{x}$ is an inner interval estimate of the tolerable solution set 
$\Xi_{\mathit{tol}}\Ab$. 
\end{theorem}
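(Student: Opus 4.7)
The plan is to reduce the claim directly to the characterization of the tolerable solution set given in \eqref{TolCharact}, namely that $x\in\Xi_{\mathit{tol}}\Ab$ iff $\mbf{A}\cdot x\subseteq\mbf{b}$, and then exploit inclusion isotonicity of interval arithmetic operations.

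First, I would fix an arbitrary point vector $x\in\mbf{x}$; the goal reduces to checking that $x$ satisfies $\mbf{A}\cdot x\subseteq\mbf{b}$. The hypothesis ``$\mbf{x}$ is a formal solution'' means that substituting $\mbf{x}$ into the interval system and carrying out all interval arithmetic operations yields the valid identity $\mbf{A}\cdot\mbf{x}=\mbf{b}$. Since $\{x\}\subseteq\mbf{x}$ componentwise, inclusion isotonicity of the interval operations ``$+$'' and ``$\cdot$'' (a basic, well-documented property of the interval arithmetic recalled in Section~\ref{RecFuncSect}) propagates through each scalar product $\mbf{a}_{ij}x_j$ and each row sum, yielding
\begin{equation*}
\mbf{A}\cdot x \ \subseteq\ \mbf{A}\cdot\mbf{x} \ =\ \mbf{b}.
\end{equation*}
Chaining these two relations gives $\mbf{A}\cdot x\subseteq\mbf{b}$, which, by the characterization of the tolerable solution set cited just before \eqref{TolCharact}, is exactly the statement $x\in\Xi_{\mathit{tol}}\Ab$. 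Since $x\in\mbf{x}$ was arbitrary, we conclude $\mbf{x}\subseteq\Xi_{\mathit{tol}}\Ab$, as required.

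There is essentially no serious obstacle: the entire content of the proof is the inclusion-isotonicity lemma for interval arithmetic combined with the known characterization \eqref{TolCharact}. The only point that deserves a line of care is the embedding of the point vector $x$ as a degenerate interval vector (with $\un{x}_j=\ov{x}_j=x_j$) so that inclusion isotonicity applies literally; after that the two-step chain $\mbf{A}\cdot x\subseteq\mbf{A}\cdot\mbf{x}=\mbf{b}$ closes the argument in one line.
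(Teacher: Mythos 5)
Your proof is correct and follows essentially the same route as the paper: fix an arbitrary point $x\in\mbf{x}$, use inclusion monotonicity to obtain $\mbf{A}x\subseteq\mbf{A}\mbf{x}=\mbf{b}$, and conclude via the characterization \eqref{TolCharact} that $x\in\Xi_{\mathit{tol}}\Ab$. No gaps.
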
 
  
\begin{proof} 
Let us recall that the point $\tilde{x}\in\mbb{R}^n$ lies in the tolerable solution 
set $\Xi_{\mathit{tol}}\Ab$ of an interval system of linear algebraic equations 
$\mbf{A}x = \mbf{b}$, if and only if $\,\mbf{A}\cdot\tilde{x}\,\subseteq\,\mbf{b}$. 
  
If the interval vector $\mbf{x}\in\mbb{IR}^n$ is a formal solution to the interval 
linear system $\mbf{A}x = \mbf{b}$, then, for any point $x\in\mbf{x}$, 
\begin{equation*} 
\mbf{A}x \;\subseteq\; \mbf{A}\mbf{x} \; = \;\mbf{b} 
\end{equation*} 
due to inclusion monotonicity. Hence, we can assert the membership 
$x\in\Xi_{\mathit{tol}}\Ab$ for every such $x\in\mbf{x}$, which implies 
$\mbf{x}\subseteq\Xi_{\mathit{tol}}\Ab$. 
\end{proof} 
  
It is worth noting that the result of Theorem~\ref{InnEstTheo} is, in fact, a particular 
case of a~very general results on inner estimation of the so-called AE-solution sets 
for interval systems of equations \cite{ModalInAnal,SharySurvey,SharyBook}. A remarkable 
property of the formal approach to the inner estimation of the solution sets to interval 
linear systems is that it produces interval estimates maximal with respect 
to inclusion \cite{IreneMaxEst,SharySurvey,SharyBook}. 
  
\begin{example} 
The formal solution to the interval linear system of equations \eqref{ExampleInSys} 
is the interval vector 
\begin{equation} 
\label{InnerEst} 
\begin{pmatrix} 
[-0.147059, 0.147059] \\[2pt] 
[-0.147059, 0.147059] \\[2pt] 
[-0.147059, 0.147059] 
\end{pmatrix}. 
\end{equation} 
We can see that it really provides an inner box within the tolerable solution 
set for the system \eqref{ExampleInSys}.\footnote{The formal solutions may be 
computed, e.\,g., using the code \texttt{subdiff} available 
at \url{http://www.nsc.ru/interval/shary/Codes/progr.html}.} It is even inclusion 
maximal in the sense that there do not exist interval boxes being inner estimates 
of the tolerable solution set and including \eqref{InnerEst} as a proper subset 
at the same time. 
\end{example} 
  
Computation of formal solutions for interval linear systems of equations is well 
developed in modern interval analysis. Over the past decades, several numerical 
methods have been designed that can efficiently compute formal solutions. These 
are various stationary single-step iterations \cite{Kupriyanova,Markov,ModalInAnal, 
SharyBook} and the subdifferential Newton method \cite{SharyOneMore,SharyBook}. 
Most of these methods work in the so-called \emph{Kaucher complete interval 
arithmetic} (see, e.\,g., \cite{Kaucher,SharySurvey,SharyBook}) which consists 
of usual ``proper'' intervals $[\un{x}, \ov{x}]$ with $\un{x}\leq\ov{x}$ as well 
as ``improper'' intervals $[\un{x}, \ov{x}]$ with $\un{x} > \ov{x}$. Kaucher 
interval arithmetic has better algebraic properties than the classical interval 
arithmetic and, in addtition, it allows to work adequately with interval 
uncertainties of various types \cite{SharySurvey,SharyBook}. 
  
\begin{example} 
\label{ForSolExmp2}
The interval equation $[1, 2]\,x = [3, 4]$ does not have proper formal solutions, 
while its formal solution in Kaucher interval arithmetic is improper interval 
$[3, 2]$. It cannot be interpreted as an inner interval estimate of the tolerable 
solution set according to Theorem~\ref{InnEstTheo}. The situation is explained 
by the fact that the tolerable solution set is empty in this case. 
\end{example} 
  
Let us turn to the interval regularization for a system of linear algebraic 
equations $Ax = b$. We intervalize it and thus get an interval linear system 
$\mbf{A}x = \mbf{b}$. Next, we compute its formal solution $\mbf{x}^\ast$. 
As a pseudo-solution of the original linear system $Ax = b$, we can take 
the middle of the vector $\mbf{x}^\ast$, that is, the point $x^{\ast}\in\mbb{R}^n$ 
with the coordinates 
\begin{equation} 
\label{FormPseSol} 
x_i^\ast = \m\mbf{x}_i^{\ast} \  \stackrel{\text{def}}{=} \ 
  \tfrac{1}{2} \bigl(\un{\mbf{x}}^\ast_{i} + \ov{\mbf{x}}^\ast_{i}\bigr), 
  \qquad i = 1,2,\ldots,n. 
\end{equation} 
If the formal solution $\mbf{x}^\ast$ is proper, then the motivation for such a choice 
of the pseudo-solution is clear. In view of Theorem~\ref{InnEstTheo} and further results, 
$\mbf{x}^\ast$ is the maximal, with respect to inclusion, inner interval box within 
the tolerable solution set. Therefore the middle point of $\mbf{x}^\ast$ is really one 
of the ``most representative'' points from the tolerable solution set. But if the formal 
solution $\mbf{x}^\ast$ is improper (as in Example~\ref{ForSolExmp2}), then the choice 
of $x^\ast$ in the form of \eqref{FormPseSol} requires explanation. 
  
If the formal solution of the intervalized system of equations $\mbf{A}x = \mbf{b}$ 
is improper, then its tolerable solution set is most likely empty. But the choice 
of a pseudo-solution in the form of \eqref{FormPseSol} ensures an ``almost minimal'' 
measure of the ``tolerance unsolvability'' of this point in the sense that it requires 
the smallest widening of the right-hand side $\mbf{b}$ to obtain a non-empty tolerable 
solution set. 
  
We recall that, for interval linear systems of the form $Ax = \mbf{b}$ with a point 
square matrix $A$, a unique formal solution exists if and only if the matrix satisfies 
the \emph{absolute regularity property} \cite{Markov,SharyOneMore,SharySurvey,SharyBook}. 
Among several equivalent formulation of this property, the simplest one is that both 
$A$ and the matrix $|A|$ (composed of the modules of the elements) should be nonsingular 
\cite{Markov,SharySurvey,SharyBook}.\footnote{This property was also called ``complete 
regularity'' and ``$\iota$-regularity'' in earlier works.} 
  
If the point matrix $A$ is absolutely regular and $\mbf{b}$ is a proper interval 
vector, then it is easy to substantiate that the formal solution to the linear system 
$Ax = \mbf{b}$ is also proper. In addition, the point \eqref{FormPseSol}, i.\,e., 
the midpoint of the inner interval box for $\Xi_{\mathit{tol}}\Ab$ provides the maximum 
value of the recognizing functional Tol. From continuity reasons, it follows that 
the same holds true for sufficiently narrow interval matrices $\mbf{A}\ni A$ too. 
Hence, the formal solutions to such interval linear systems are proper, they can be 
interpreted as inner interval boxes for the corresponding tolerable solution sets, 
and the instruction \eqref{FormPseSol} makes good sense. 
  
For slightly wider, but still sufficiently narrow interval matrices for which 
the interval linear systems has formal solutions that are not entirely proper, the same 
continuity reasons imply that the recipe \eqref{FormPseSol} gives us points which are not 
far from the optimal point arg max Tol. 
  
It is worthwhile to note that, inflating the right-hand side vector, we can always make 
the point \eqref{FormPseSol} fall into a non-empty tolerable solution set. Indeed, let 
$\mbf{x}^\ast$ be a formal solution to the interval equation system $\mbf{A}x = \mbf{b}$ 
and $\mbf{e} = ([-1, 1], \ldots, [-1, 1])^{\top}$ is the $n$-vector of all $[-1, 1]$'s. 
If $\mbf{x}^\ast$ is improper in some components, then we take the vector $\mbf{x}^{\ast} 
+ t\mbf{e}$. It satisfies $\m\!(\mbf{x}^{\ast} + t\mbf{e}) = \m\mbf{x}^\ast$, and all 
the components of $\mbf{x}^{\ast} + t\mbf{e}\,$ become proper for $t \geq t^{\ast}$, where 
\begin{equation*} 
t^{\ast} \  \stackrel{\mathrm{def}}{=} \  
\tfrac{1}{2} \Bigl|\,\min_{i}\;(\ov{\mbf{x}}^\ast_{i} - \un{\mbf{x}}^\ast_{i})\Bigr|. 
\end{equation*} 
Also, the point \eqref{FormPseSol} belongs to $\mbf{x}^{\ast} + t\mbf{e}$, i.\,e. 
\begin{equation} 
\label{MidPointIncl} 
\m\mbf{x}^\ast \in \;\mbf{x}^{\ast} + t\mbf{e} 
\end{equation} 
for such $t\geq t^{\ast}$. We can assert that then 
\begin{equation} 
\label{InflEstim} 
\mbf{A}(\m\mbf{x}^{\ast}) \  
\subseteq \  \mbf{A}(\mbf{x}^{\ast} + t\mbf{e}) \  
\subseteq \  \mbf{A}\mbf{x}^{\ast} + \mbf{A}(t\mbf{e}) \ 
  = \  \mbf{b}^{\ast} + t\mbf{Ae}
\end{equation} 
due to \eqref{MidPointIncl}, inclusion monotonicity of the interval arithmetic operations 
and subdistributivity of multiplicaition with respect to addition for proper $\mbf{A}$. 
As a consequence, the point $\m\mbf{x}^\ast$ lies in the non-empty tolerable solution set 
of the interval linear system 
\begin{equation*} 
\mbf{A}x\, = \,\mbf{b} + t^{\ast}\mbf{Ae} 
\end{equation*} 
with the uniformly widened right-hand side vector. In fact, the exact equality 
$\mbf{A}(\mbf{x}^{\ast} + t\mbf{e}) = \mbf{A}\mbf{x}^{\ast} + \mbf{A}(t\mbf{e})$ holds 
true instead of inclusion, since both $t\mbf{e}$ and $\mbf{A}(t\mbf{e})$ are balanced 
intervals (symmetric with respect to zero). This makes our estimate in \eqref{InflEstim} 
even sharper. 
  
  
\section*{Conclusion}

The work proposes a new approach to regularization of ill-conditioned and imprecise 
systems of linear algebraic equations based on interval analysis methods, and we call 
it \emph{interval regularization}. Its essence is the ``immersion'' of the original 
system of equations into an interval system of the same structure for which the 
so-called tolerable solution set is studied, the most stable of the solution sets. 
As a pseudo-solution of the original system of equations, we assign a point from 
the tolerable solution set (if it is not empty) or a point providing the largest 
``tolerable'' compatibility (if this solution set is empty). To find such a point, 
one can apply numerical methods for computing formal (algebraic) solutions of 
interval systems or, alternatively, algorithms of non-smooth optimization for 
computing the maximum of the recognizing functional of the tolerarble solution set. 
  
The interval regularization has two strengths. First, for the system of linear 
equations $Ax = b$, it depends on the properties of the matrix $A$ significantly 
less than in other approaches. The properties of $A$ are taken into account as 
if automatically, by the method itself. Second, information about the data uncertainty, 
both in the matrix $A$ and right-hand side vector $b$, is taken into account very 
simply and naturally. One only need to further ``inflate'' the interval matrix and/or 
the right-hand side vector according to the known accuaracy level. 
  
An interesting open question is the choice of the extent to which we should 
``inflate'' the matrix $A$ of the original system \eqref{LinSystem1}--\eqref{LinSystem2}. 
The wider the interval matrix $\mbf{A}$ of the system of equations \eqref{InLSystem1}%
--\eqref{InLSystem2} or \eqref{InLSystem3}, the more well-conditioned matrices are in it, 
the more stable the tolerable solution set according to \eqref{SetTheoRepr} and, hence, 
the better the general regularization of the problem. On the other hand, for a wider 
interval matrix $\mbf{A}$, much different from the original point matrix $A$, 
the solution of the regularized problem can be strongly distorted in comparison with 
the solution of the original system. Consequently, how should we choose optimally 
the widths of the elements of the interval matrix $\mbf{A}\ni A$? If the original 
system \eqref{LinSystem1}--\eqref{LinSystem2} is specified imprecisely and inaccurately, 
as an interval equation system \eqref{InLSystem1}--\eqref{InLSystem2} with a predetermined 
accuracy level, then the question is solved naturally. In the general case, an additional 
study is necessary. 
  
A certain drawback of the new approach stems from the fact that, in order to construct 
the desired pseudo-solution, we may have to process some of (or even many) ``endpoint'' 
linear systems of the regularized interval equation system, and some of these endpoint 
systems can have worse conditionality than the original system. However, the reality 
of this danger depends on the way the interval regularization method is implemented. 
Therefore, technological issues related to the implementation of the corresponding 
numerical methods are very important, but their development is beyond the scope 
of our article.


  
\end{document}